\newcommand{\arxiv}[1]{\href{http://arxiv.org/abs/#1}{\texttt{arXiv:#1}}}
\theoremstyle{plain}
\newtheorem{theorem}{Theorem}[section]
\newtheorem{corollary}[theorem]{Corollary}
\theoremstyle{definition}
\newtheorem{definition}[theorem]{Definition}
\newtheorem{example}[theorem]{Example}
\theoremstyle{remark}
\newtheorem{remark}[theorem]{Remark}
\title{\bf Integer points enumerator of hypergraphic polytopes}
\author{Marko Pe\v{s}ovi\'c\\
\small Faculty of Civil Engineering, University of Belgrade\\[-0.8ex]
\small\tt mpesovic@grf.bg.ac.rs}
\date{
\small Mathematics Subject Classifications: 05C65, 16T05, 52B11}
\begin{document}

\maketitle

\begin{abstract}
For a hypergraphic polytope there is a weighted quasisymmetric
function which enumerates positive integer points in its normal
fan and determines its $f-$polynomial. This quasisymmetric
function invariant of hypergraphs extends the Stanley chromatic
symmetric function of simple graphs. We consider a certain
combinatorial Hopf algebra of hypergraphs and show that universal
morphism to quasisymmetric functions coincides with this
enumerator function. We calculate the $f-$polynomial of uniform
hypergraphic polytopes.\\

\bigskip\noindent \textbf{Keywords}:
quasisymmetric function, hypergraph, hypergraphic polytope,
combinatorial Hopf algebra
\end{abstract}

\section{Introduction }

The theory of combinatorial Hopf algebras developed by Aguiar,
Bergeron and Sottile in the seminal paper \cite{ABS} provides an
algebraic framework for symmetric and quasisymmetric generating
functions arising in enumerative combinatorics. Extensive studies
of various combinatorial Hopf algebras are initiated recently
\cite{BHM},\cite{BJR},\cite{GS},\cite{GSJ}. The geometric
interpretation of the corresponding (quasi)symmetric functions was
first given for matroids \cite{BJR} and then for simple graphs
\cite{G1} and building sets \cite{G2}. The quasisymmetric function
invariants are expressed as integer points enumerators associated
to generalized permutohedra. This class of polytopes introduced by
Postnikov \cite{P} is distinguished with rich combinatorial
structure. The comprehensive treatment of weighted integer points
enumerators associated to generalized permutohedra is carried out
by Gruji\'c et al. \cite{GPS}. In this paper we consider a certain
naturally defined non-cocommutative combinatorial Hopf algebra of
hypergraphs and show that the derived quasisymmetric function
invariant of hypergraphs is integer points enumerator of
hypergraphic polytopes (Theorem \ref{main}).

\section{Combinatorial Hopf algebra of hypergraphs $\mathcal{HG}$}

A \emph{combinatorial Hopf algebra} is a pair
$(\mathcal{H},\zeta)$ of a graded connected Hopf algebra
$\mathcal{H}=\oplus_{n\geq0}\mathcal{H}_n$ over a field
$\bold{k}$, whose homogeneous components $\mathcal{H}_n,n\geq0$
are finite-dimensional, and a multiplicative linear functional
$\zeta:\mathcal{H}\rightarrow\bold{k}$ called \emph{character}. We
consider a combinatorial Hopf algebra structure on hypergraphs
different from the chromatic Hopf algebra of hypergraphs studied
in \cite{GSJ}. The difference is in the coalgebra structures based
on different combinatorial constructions, which is manifested in
(non)co-commutativity. It extends the Hopf algebra of building set
studied by Gruji\'c in \cite{G1}. This Hopf algebra of hypergraphs
can be derived from the Hopf monoid structure on hypergraphs
introduced in \cite{AA}.

A \emph{hypergraph} $\mathbf{H}$ on the vertex set $V$ is a
collection of nonempty subsets $H\subseteq V$, called
\emph{hyperedges}. We assume that there are no ghost vertices,
i.e. $\mathbf{H}$ contains all singletons $\{i\}, i\in V$. A
hypergraph $\mathbf{H}$ is \emph{connected} if it can not be
represented as a disjoint union of hypergraphs
$\mathbf{H}_1\sqcup\mathbf{H}_2$. Every hypergraph $\mathbf{H}$
splits into its connected components. Let $c(\bold{H})$ be the
number of connected components of $\bold{H}$. Hypergraphs
$\mathbf{H}_1$ and $\mathbf{H}_2$ are isomorphic if there is a
bijection of their sets of vertices that sends hyperedges to
hyperedges. Let $\mathcal{HG}=\bigoplus_{n\geq0}\mathcal{HG}_n,$
where $\mathcal{HG}_n$ is the linear span of isomorphism classes
$[\mathbf{H}]$ of hypergraphs on the set $[n]$.

\begin{definition}
For a subset $S\subseteq[n]$ the \emph{restriction} $\bold{H}|_S$
and the \emph{contraction} $\bold{H}/S$ are defined by
$$\mathbf{H}|_S=\{H\in\mathbf{H}\,\,:\,\,H\subseteq S\},$$
$$\mathbf{H}/S=\{H\setminus S\,\,:\,\,H\in\mathbf{H}\}.$$
\end{definition}

\noindent Define a \emph{product} and a \emph{coproduct} on the
linear space $\mathcal{HG}$ by
$$[\mathbf{H}_1]\cdot[\mathbf{H}_2]\;=\;[\mathbf{H}_1\sqcup\mathbf{H}_2],$$
$$\Delta([\mathbf{H}])=\sum_{S\subset [n]}[\mathbf{H}|_S]\otimes[\mathbf{H}/S].$$
The straightforward checking shows that the space $\mathcal{HG}$
with the above operations and the unit
$\eta:\bold{k}\rightarrow\mathcal{HG}$ given by
$\eta(1)=[\mathbf{H}_\emptyset]$ (the empty hypergraph) and the
counit $\epsilon:\mathcal{HG}\rightarrow\bold{k}$ which is the
projection on the component $\mathcal{HG}_0=\bold{k}$, become a
graded connected commutative and non-cocommutative bialgebra.
Since graded connected bialgebras of finite type posses antipodes,
$\mathcal{HG}$ is in fact a Hopf algebra. The formula for antipode
$S:\mathcal{HG}\rightarrow\mathcal{HG}$ is derived from the
general Takeuchi's formula \cite{T}
$$S([\bold{H}])=\sum_{k\geq
1}(-1)^k\sum_{\mathcal{L}_k}\prod_{j=1}^{k}([\bold{H}]\mid_{I_j})/I_{j-1},$$
where the inner sum goes over all chains of subsets
$\mathcal{L}_k:\emptyset=I_0\subset I_1\subset\cdots\subset
I_{k-1}\subset I_k=V$. Define a character
$\zeta:\mathcal{HG}\rightarrow\mathbf{k}$ by $\zeta([\bold{H}])=1$
if $\bold{H}$ is discrete, i.e. contains only singletons and
$\zeta([\bold{H}])=0$ otherwise. This determines the combinatorial
Hopf algebra $(\mathcal{HG},\zeta)$.

\section{Integer points enumerator}

In this section we review the definition of the integer points
enumerator of a generalized permutohedron introduced in
\cite{GPS}.

For a point $(a_1,a_2,\ldots,a_n)\in\mathbb{R}^n$ with increasing coordinates $a_1<\cdots<a_n$ let us define the set
$\Omega(a_1,a_2,\ldots,a_n)$ by
$$\Omega(a_1,a_2,\ldots,a_n)=\{(a_{\omega(1)},a_{\omega(2)},\ldots,a_{\omega(n)})\;:\;\omega\in\mathfrak{S}_n\},$$
where $\mathfrak{S}_n$ is the permutation group of the set $[n]$.
The convex hull of the set $\Omega(a_1,a_2,\ldots,a_n)$ is a
\emph{standard $(n-1)-$dimensional permutohedron $Pe^{n-1}$}. The
$d-$dimensional faces of $Pe^{n-1}$ are in one-to-one
correspondence with set compositions
$\mathcal{C}=C_1|C_2|\cdots|C_{n-d}$ of the set $[n],$ see
\cite{P}, Proposition 2.6. By this correspondence and the obvious
correspondence between set compositions and flags of subsets we
identify faces of $Pe^{n-1}$ with flags
$\mathcal{F}:\emptyset=F_0\subset F_1\subset\cdots\subset
F_{n-d}=[n]$. The dimension of a face and length of the
corresponding flag is related by
$\dim(\mathcal{F})=n-|\mathcal{F}|$.

The normal fan $\mathcal{N}(Pe^{n-1})$ of the standard
permutohedron is the braid arrangement fan $\{x_i=x_j\}_{1\leq
i<j\leq n}$ in the space $\mathbb{R}^n$. The dimension of the
normal cone $C_\mathcal{F}$ at the face $\mathcal{F}$ is
$\dim(C_\mathcal{F})\;=\;|\mathcal{F}|.$ The relative interior
points $\omega\in C^\circ_\mathcal{F}$ are characterized by the
condition that their coordinates are constant on $F_i\setminus
F_{i-1}$ and increase $\omega|_{F_i\setminus
F_{i-1}}<\omega|_{F_{i+1}\setminus F_i}$ with $i$. A positive
integer vector $\omega\in\mathbb{Z}^n_+$ belongs to
$C^\circ_\mathcal{F}$ if the weight function
$\omega^\ast(x)\;=\;\langle\omega, x\rangle$ is maximized on
$Pe^{n-1}$ along a face $\mathcal{F}$.

\begin{definition}
For a flag $\mathcal{F}$ let $M_\mathcal{F}$ be the enumerator of
interior positive integer points $\omega\in\mathbb{Z}^n_+$ of the
corresponding cone $C_\mathcal{F}$
$$
M_\mathcal{F}\;\;=\sum_{\omega\in\mathbb{Z}^n_+\,\cap\,C^\circ_\mathcal{F}}\bold{x}_\omega,$$
where $\bold{x}_\omega=x_{\omega_1}x_{\omega_2}\cdots
x_{\omega_n}$.
\end{definition}
\noindent The enumerator $M_\mathcal{F}$ is a monomial
quasisymmetric function depending only of the composition
$\mathrm{type}(\mathcal{F})=(|F_1|,|F_2\setminus
F_1|,\ldots,|F_k\setminus F_{k-1}|)$.

The fan $\mathcal{N}$ is a \emph{coaresement} of
$\mathcal{N}(Pe^{n-1})$ if every cone in $\mathcal{N}$ is a union
of cones of $\mathcal{N}(Pe^{n-1})$. An $(n-1)-$dimensional
\emph{generalized permutohedron} $Q$ is a convex polytope whose
normal fan $\mathcal{N}(Q)$ is a coaresement of
$\mathcal{N}(Pe^{n-1})$. There is a map
$\pi_Q:L(Pe^{n-1})\rightarrow L(Q)$ between face lattices given by
$$\pi_Q(\mathcal{F})=G
\;\;\;\text{ if and only if }\;\;\; C^\circ_\mathcal{F}\subseteq
C_G^\circ,$$ where $C_G^\circ$ is the relative interior of the
normal cone $C_G$ at the face $G\in L(Q).$

\begin{definition}\label{general}
For an $(n-1)-$generalized permutohedron $Q$ let $F_q(Q)$ be the
weighted integer points enumerator
$$
F_q(Q)\;\;=\sum_{\omega\in\mathbb{Z}^n_+}q^{\dim(\pi_Q(\mathcal{F}_\omega))}\bold{x}_\omega\;\;=\sum_{\mathcal{F}\in
L(Pe^{n-1})}q^{\dim(\pi_Q(\mathcal{F}))}M_\mathcal{F},
$$
where $\mathcal{F}_\omega$ is a unique face of $Pe^{n-1}$
containing $\omega$ in the relative interior.
\end{definition}

\begin{remark}\label{remark}
It is shown in \cite[Theorem 4.4]{GPS} that the enumerator
$F_q(Q)$ contains the information about the $f$-vector of a
generalized permutohedron $Q$. More precisely, the principal
specialization of $F_q(Q)$ gives the $f$-polynomial of $Q$

\begin{equation}\label{face}
f(Q,q)=(-1)^{n}\mathbf{ps}(F_{-q}(Q))(-1).
\end{equation}
\noindent Recall that the principal specialization
$\mathbf{ps}(F)(m)$ of a quasisymmetric function $F$ in variables
$x_1, x_2,\ldots$ is a polynomial in $m$ obtained from the
evaluation map at $x_i=1, i=1,\ldots,m$ and $x_i=0$ for $i>m$.
\end{remark}

\section{The hypergraphic polytope}

For the standard basis vectors $e_i,1\leq i\leq n$ in
$\mathbb{R}^n$ let $\Delta_H=\mathrm{conv}\{e_i\,:\;i\in H\}$ be
the simplex determined by a subset $H\subset[n]$. The
\emph{hypergraphic polytope} of a hypergraph $\mathbf{H}$ on $[n]$
is the Minkowski sum of simplices
$$P_{\,\mathbf{H}}=\sum_{H\in\,\mathbf{H}}\Delta_H.$$
As generalized permutohedra can be described as the Minkowski sum
of delated simplices (see \cite{P}) we have that hypergraphic
polytopes are generalized permutohedra. For the following
description of $P_\mathbf{H}$ see \cite[Section 1.5]{BP} and the
references within it. Let
$\mathbf{H}=\mathbf{H}_1\sqcup\mathbf{H}_2\sqcup\cdots\sqcup\mathbf{H}_k$
be the decomposition into connected components. Then
$P_{\,\mathbf{H}}=P_{\,\mathbf{H}_1}\times
P_{\,\mathbf{H}_2}\times\cdots\times P_{\,\mathbf{H}_k}$ and
$\dim(P_{\,\mathbf{H}})=n-k.$ For connected hypergraphs
$P_{\,\mathbf{H}}$ can be described as the intersection of the
hyperplane
$H_{\,\mathbf{H}}:=\left\{x\in\mathbb{R}^n\;\;:\;\;\sum_{i=1}^nx_i=|\mathbf{H}|\right\}$
with the halfspaces
$H_{S,\,\geq}:=\left\{x\in\mathbb{R}^n\;\;:\;\;\sum_{i\in
S}x_i\geq|\mathbf{H}|_S|\right\}$ corresponding to all proper
subsets $S\subset[n]$. It follows that $P_{\,\mathbf{H}}$ can be
obtained by iteratively cutting the standard simplex
$\Delta_{[n]}$ by the hyperplanes $H_{S,\,\geq}$ corresponding to
proper subsets $S$. For instance the standard permutohedron
$Pe^{n-1}$ is a hypergraphic polytope $P_{\mathbf{C}_n}$
corresponding to the complete hypergraph $\mathbf{C}_n$ consisting
of all subsets of $[n]$.

\begin{definition}\label{rank}
For a connected hypergraph $\mathbf{H}$ the
\emph{$\mathbf{H}-$rank} is a map
$\mathrm{rk}_{\,\mathbf{H}}:L(Pe^{n-1})\rightarrow\{0,1,\ldots,n-1\}$
given by
$$\mathrm{rk}_{\mathbf{H}}(\mathcal{F})\,=\,\dim(\pi_{P_\mathbf{H}}(\mathcal{F})).$$
\end{definition}

\noindent Subsequently we deal only with connected hypergraphs.
The quasisymmetric function $F_q(P_{\mathbf{H}})$ corresponding to
a hypergraphic polytope $P_\mathbf{H}$, according to Definitions
\ref{general} and \ref{rank}, depends only on the rank function

\begin{equation}\label{F_q}
F_q(P_{\,\mathbf{H}}\,)\;\;=\sum_{\mathcal{F}\in
L(Pe^{n-1})}q^{\mathrm{rk}_{\mathbf{H}}(\mathcal{F})}M_\mathcal{F}.
\end{equation}

\noindent We extend the ground field $\bold{k}$ to the field of
rational function $\bold{k}(q)$ in a variable $q$ and consider the
Hopf algebra $\mathcal{HG}$ over this extended field. Let
$\mathrm{rk}(\mathbf{H})=n-c(\mathbf{H})$ for hypergraphs on $n$
vertices. Define a linear functional
$\zeta_q:\mathcal{HG}\rightarrow\mathbf{k}(q)$ with
$$\zeta_q([\mathbf{H}])=q^{\mathrm{rk}(\mathbf{H})}=q^{n-c(\mathbf{H})},$$
which is obviously multiplicative. By the characterization of the
combinatorial Hopf algebra of quasisymmetric functions $(QSym,
\zeta_Q)$ as a terminal object (\cite[Theorem 4.1]{ABS}) there
exists a unique morphism of combinatorial Hopf alegbras
$\Psi_q\;:\;(\mathcal{H},\;\zeta_q)\;\rightarrow\;(\mathcal{Q}Sym,\;\zeta_\mathcal{Q})$
given on monomial basis by
$$\Psi_q([\mathbf{H}])=\sum_{\alpha\models n}(\zeta_q)_\alpha([\mathbf{H}])M_\alpha.$$

\noindent We determine the coefficients by monomial functions in
the above expansion more explicitly. For a hypergraph $\mathbf{H}$
define its splitting hypergraph $\textbf{H}/\mathcal{F}$ by a flag
$\mathcal{F}$ with
$$\mathbf{H}/\mathcal{F}\;=\;\bigsqcup_{i=1}^k\mathbf{H}|_{F_i}/F_{i-1}.$$ The coefficient corresponding to a
composition $\alpha=(\alpha_1,\alpha_2,\ldots,\alpha_k)\models n$
is a polynomial in $q$ determined by
$$(\zeta_q)_\alpha([\mathbf{H}])=\sum_{\mathcal{F}\,:\,\mathrm{type}(\mathcal{F})=\alpha}\prod_{i=1}^kq^{\mathrm{rk}(\mathbf{H}|_{F_i}/F_{i-1})}=
\sum_{\mathcal{F}\,:\,\mathrm{type}(\mathcal{F})=\alpha}q^{\mathrm{rk}(\mathbf{H}/\mathcal{F})},$$
where the sum is over all flags $\mathcal{F}:\emptyset =:
F_0\subset F_1 \subset\cdots\subset F_k := [n]$ of the type
$\alpha$ and

\begin{equation}\label{splitt}
\mathrm{rk}(\mathbf{H}/\mathcal{F})=\sum_{i=1}^k\mathrm{rk}(\mathbf{H}|_{F_i}/F_{i-1})=n-\sum_{i=1}^kc(\mathbf{H}|_{F_i}/F_{i-1}).
\end{equation} By this correspondence, we have
\begin{equation}\label{Psi}
\Psi_q([\mathbf{H}])\;\;\;=\sum_{\mathcal{F}\in L(Pe^{n-1})}q^{\mathrm{rk}(\mathbf{H}/\mathcal{F})}M_{\mathrm{type}(\mathcal{F})}.
\end{equation}

Now we have two quasisymmetric functions associated to hypergraphs
whose expansions in monomial bases are given by $(\ref{F_q})$ and
$(\ref{Psi})$. We show that they actually coincide which describes
the corresponding hypergraphic quasisymmetric invariant
algebraically and geometrically.

\begin{theorem}\label{main}
For a connected hypergraph $\mathbf{H}$ the integer points
enumerator $F_q(P_{\,\mathbf{H}})$ associated to a hypergraphic
polytope and the quasisymmetric function $\Psi_q([\mathbf{H}])$
coincide
$$F_q(P_{\,\mathbf{H}}) = \Psi_q([\mathbf{H}]).$$
\end{theorem}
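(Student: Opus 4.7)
The plan is to match the two expansions coefficient-by-coefficient in the monomial basis. Since $M_\mathcal{F}=M_{\mathrm{type}(\mathcal{F})}$, comparing the formulas (\ref{F_q}) and (\ref{Psi}) it suffices to show, for every flag $\mathcal{F}:\emptyset=F_0\subset F_1\subset\cdots\subset F_k=[n]$, the purely geometric identity
$$\mathrm{rk}_\mathbf{H}(\mathcal{F})\;=\;\mathrm{rk}(\mathbf{H}/\mathcal{F}),$$
i.e. that the dimension of the face of $P_{\mathbf{H}}$ onto which the cone $C_\mathcal{F}$ is collapsed equals $n-\sum_i c(\mathbf{H}|_{F_i}/F_{i-1})$.

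First I would describe the face $\pi_{P_\mathbf{H}}(\mathcal{F})$ explicitly. Pick $\omega\in C^\circ_\mathcal{F}$; its coordinates are constant on each block $F_i\setminus F_{i-1}$ and strictly increase with $i$. Because the support function is additive under Minkowski sum, the face of $P_\mathbf{H}=\sum_{H\in\mathbf{H}}\Delta_H$ maximized by $\omega$ is the Minkowski sum of the faces $(\Delta_H)_\omega$, and on a single simplex this face is $\mathrm{conv}\{e_v:v\in H\cap(F_{j(H)}\setminus F_{j(H)-1})\}$, where $j(H):=\min\{i:H\subseteq F_i\}$ is the index of the last block met by $H$.

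Next I would regroup the summands by the level $j(H)$. The hyperedges with $j(H)=i$ are exactly those with $H\subseteq F_i$ and $H\not\subseteq F_{i-1}$, and after the shift $H\mapsto H\setminus F_{i-1}$ these are precisely the hyperedges of $\mathbf{H}|_{F_i}/F_{i-1}$, a hypergraph on the vertex set $F_i\setminus F_{i-1}$ (all singletons survive because $v\notin F_{i-1}$ for $v\in F_i\setminus F_{i-1}$). The level-$i$ part of the Minkowski sum therefore equals $P_{\mathbf{H}|_{F_i}/F_{i-1}}$ embedded in the coordinate subspace $\mathbb{R}^{F_i\setminus F_{i-1}}$. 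Since the $k$ subspaces $\mathbb{R}^{F_i\setminus F_{i-1}}$ are linearly independent, the dimensions add and
$$\pi_{P_\mathbf{H}}(\mathcal{F})\;\cong\;\prod_{i=1}^{k}P_{\mathbf{H}|_{F_i}/F_{i-1}}.$$
Applying the dimension formula $\dim P_\mathbf{G}=|V(\mathbf{G})|-c(\mathbf{G})$ recalled from \cite{BP} to each factor gives
$$\mathrm{rk}_\mathbf{H}(\mathcal{F})=\sum_{i=1}^{k}\bigl(|F_i\setminus F_{i-1}|-c(\mathbf{H}|_{F_i}/F_{i-1})\bigr)=n-\sum_{i=1}^{k}c(\mathbf{H}|_{F_i}/F_{i-1}),$$
which by (\ref{splitt}) is exactly $\mathrm{rk}(\mathbf{H}/\mathcal{F})$.

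The main obstacle, and the only non-bookkeeping step, is the identification in the previous paragraph: one has to verify that after the level-wise grouping the partial Minkowski sum really is the hypergraphic polytope of the contracted restriction (so that the dimension formula is applicable), and that hyperedges from distinct levels contribute through genuinely linearly independent coordinate directions so that Minkowski dimensions add. Both points reduce to the fact that $F_i\setminus F_{i-1}$ is precisely the support of the level-$i$ face data, which is immediate from the structure of $\omega\in C^\circ_\mathcal{F}$, but they are the substantive content of the argument.
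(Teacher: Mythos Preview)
Your argument is correct and follows essentially the same route as the paper: reduce to showing $\mathrm{rk}_\mathbf{H}(\mathcal{F})=\mathrm{rk}(\mathbf{H}/\mathcal{F})$, identify the $\omega$-maximal face of each simplex $\Delta_H$ as $\Delta_{H\setminus F_{j(H)-1}}$ (your set $H\cap(F_{j(H)}\setminus F_{j(H)-1})$ equals $H\setminus F_{j(H)-1}$ since $H\subseteq F_{j(H)}$), group by level, and recognize the resulting Minkowski sum as the hypergraphic polytope of the splitting hypergraph $\mathbf{H}/\mathcal{F}$. Your explicit remark that the coordinate subspaces $\mathbb{R}^{F_i\setminus F_{i-1}}$ are linearly independent, so that Minkowski dimensions add, is exactly what underlies the paper's use of $\dim P_{\mathbf{H}/\mathcal{F}}=\mathrm{rk}(\mathbf{H}/\mathcal{F})$.
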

\begin{proof}
Let $\mathbf{H}$ be a connected hypergraph on the set $[n]$ and
$\mathcal{F}:\emptyset=F_0\subset F_1\subset
F_2\subset\cdots\subset F_m=[n]$ be a flag of subsets of $[n]$. It
is sufficient to prove that
\begin{equation}\label{jedan}
\mathrm{rk}_\mathbf{H}(\mathcal{F})=\mathrm{rk}(\mathbf{H}/\mathcal{F}).
\end{equation}
For this we need to determine the face $G$ of the hypergraphic
polytope $P_\mathbf{H}$ along which the weight function
$\omega^{\ast}$ is maximized for an arbitrary $\omega\in
C_\mathcal{F}^{\circ}$. Since $P_\mathbf{H}$ is the Minkowski sum
of simplices $\Delta_H$ for $H\in\mathbf{H}$ the face $G$ is
itself a Minkowski sum of the form
$G=\sum_{H\in\mathbf{H}}(\Delta_H)_\mathcal{F}$ where
$(\Delta_H)_\mathcal{F}$ is a unique face of $\Delta_H$ along
which the weight function $\omega^{\ast}$ is maximized for
$\omega\in C_\mathcal{F}^{\circ}$. Let
$\omega=(\omega_1,\ldots,\omega_n)$ where $\omega_i=j$ if $i\in
F_j\setminus F_{j-1}$ for $i=1,\ldots,n$. Then $\omega\in
C_\mathcal{F}^{\circ}$ and we can convince that
$(\Delta_H)_\mathcal{F}=\Delta_{H\setminus F_{j-1}}$ where
$j=\min\{k\mid H\subset F_k\}$. Denote by $\mathbf{H}_j$ the
collection of all $H\in\mathbf{H}$ with $j=\min\{k\mid H\subset
F_k\}$ for $j=1,\ldots,m$. We can represent the face $G$ as
$G=\sum_{j=1}^{m}\sum_{H\in\mathbf{H}_j}\Delta_{H\setminus
F_{j-1}},$ which shows that $G$ is precisely a hypergraphic
polytope corresponding to the splitting hypergraph

$$G=P_{\mathbf{H}/\mathcal{F}}.$$ The equation $(\ref{jedan})$ follows from the
fact that $\dim
P_{\mathbf{H}/\mathcal{F}}=\mathrm{rk}(\mathbf{H}/\mathcal{F}),$
which is given by $(\ref{splitt})$.

\end{proof}
\noindent As a corollary, by Remark \ref{remark} and the equation
$(\ref{face})$ within it, we can derive the $f$-polynomial of a
hypergraphic polytope $P_\mathbf{H}$ in a purely algebraic way.

\begin{corollary}\label{cor}
The $f-$polynomial of a hypergraphic polytope $P_{\,\mathbf{H}}$
is determined by the principal specialization
$$f(P_{\,\mathbf{H}},q)=(-1)^n\,\mathbf{ps}\left(\Psi_{-q}([\mathbf{H}])\right)(-1).$$
\end{corollary}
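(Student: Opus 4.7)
The plan is to obtain the corollary as an immediate consequence of Theorem \ref{main} combined with equation (\ref{face}) of Remark \ref{remark}. Since the hypergraphic polytope $P_{\mathbf{H}}$ is a generalized permutohedron, (\ref{face}) applies directly and gives
$$f(P_{\mathbf{H}}, q) = (-1)^n\,\mathbf{ps}(F_{-q}(P_{\mathbf{H}}))(-1),$$
so the only task is to replace $F_{-q}(P_{\mathbf{H}})$ by $\Psi_{-q}([\mathbf{H}])$ on the right-hand side.

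For this substitution to be legitimate, I would first note that by the monomial expansions (\ref{F_q}) and (\ref{Psi}), both sides of Theorem \ref{main} are polynomials in $q$ of degree at most $n-1$ with coefficients in $\mathcal{Q}Sym$, so the identity $F_q(P_{\mathbf{H}}) = \Psi_q([\mathbf{H}])$ is a genuine equality in $\mathcal{Q}Sym[q]$. The substitution $q \mapsto -q$ then produces the valid identity $F_{-q}(P_{\mathbf{H}}) = \Psi_{-q}([\mathbf{H}])$ in the same ring. Applying the principal specialization $\mathbf{ps}(\cdot)(m)$, which acts only on the $\mathcal{Q}Sym$ factor and preserves the identity, and evaluating at $m = -1$ gives
$$\mathbf{ps}(F_{-q}(P_{\mathbf{H}}))(-1) = \mathbf{ps}(\Psi_{-q}([\mathbf{H}]))(-1).$$
Substituting into the formula above from Remark \ref{remark} yields exactly the claim.

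There is essentially no obstacle: the corollary is a packaging statement that feeds Theorem \ref{main} into the already cited result of \cite{GPS}. The only conceptual point worth highlighting is the legitimacy of the substitution $q \mapsto -q$, which is immediate from the polynomial nature of both $F_q(P_{\mathbf{H}})$ and $\Psi_q([\mathbf{H}])$ in the variable $q$ visible in (\ref{F_q}) and (\ref{Psi}).
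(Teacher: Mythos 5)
Your proposal is correct and follows exactly the paper's route: the corollary is obtained by combining Theorem \ref{main} with equation (\ref{face}) from Remark \ref{remark}, substituting $\Psi_{-q}([\mathbf{H}])$ for $F_{-q}(P_{\mathbf{H}})$. Your added remark on the legitimacy of the substitution $q\mapsto -q$ is a harmless elaboration of the same argument.
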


We proceed with some examples and calculations.

\begin{example}
Let $\bold{U}_{n,k}$ be the $k$-uniform hypergraph containing all
$k$-elements subsets of $[n]$ with $k>1$. Divide flags into two
families depending on whether they contain a $k$-elements subset.
Let $\circ$ be a bilinear operation on quasisymmetric functions
given on the monomial bases by concatenation $M_\alpha\circ
M_\beta=M_{\alpha\cdot\beta}.$ The flags that contain $k$-elements
subset contribute to $\Psi([\bold{U}_{n,k}])$ with
$$\sum_{i=1}^{k}{n\choose k-i,i,n-k}q^{i-1}M_{(1)}^{k-i}\circ M_{(i)}
\circ\Psi_q([\mathbf{C}_{n-k}]).$$ The contribution to
$\Psi([\bold{U}_{n,k}])$ of the remaining flags is
$$\sum_{0\leq a<k<n-b\leq n}{n\choose a,b,n-a-b}q^{n-a-b-1}M_{(1)}^a\circ M_{(n-a-b)}
\circ\Psi_q(\mathbf{C}_b).$$ By Corollary \ref{cor} since the
principal specialization respects the operation $\circ$ it follows
from $P_{\mathbf{C}_m}=Pe^{m-1}$ that

$$f(P_{\,\bold{U}_{n,k}},q)=\sum_{i=1}^{k}{n\choose k-i,i,n-k}q^{i-1}f(Pe^{n-k-1},q)+$$
$$+\sum_{0\leq a<k<n-b\leq n}{n\choose
a,b,n-a-b}q^{n-a-b-1}f(Pe^{b-1},q).$$

\end{example}

\begin{example}
The hypergraphic polytope $PS^{n-1}$ corresponding to the
hypergraph $\{[1],[2],\ldots,[n]\}$ is known as the Pitman-Stanley
polytope. It is combinatorially equivalent to the $(n-1)$-cube
\cite[Proposition 8.10]{P}. The following recursion is satisfied

$$F_q(PS^{n})=F_q(PS^{n-1})M_{(1)}+(q-1)(F_q(PS^{n-1}))_{+1},$$
where $_{+1}$ is given on monomial bases by
$(M_{(i_1,i_2,\ldots,i_k)})_{+1}=M_{(i_1,i_2\ldots,\i_k+1)}$. It
can be seen by dividing flags into two families according to the
position of the element $n$. To a flag
$\mathcal{F}:\emptyset=F_0\subset F_1\subset\cdots\subset F_m=[n]$
we associate the flag
$\widetilde{\mathcal{F}}:\emptyset=F_0\subset
F_1\setminus\{n\}\subset\cdots\subset F_m\setminus\{n\}=[n-1]$. If
$n\in F_k$ for some $k<m$ then
$\mathrm{rk}_{PS^{n}}(\mathcal{F})=\mathrm{rk}_{PS^{n-1}}(\mathcal{F})$
and if $n\notin F_k$ for $k<m$ then
$\mathrm{rk}_{PS^{n}}(\mathcal{F})=\mathrm{rk}_{PS^{n-1}}(\mathcal{F})+1$.
The principal specialization of the previous recursion formula
gives
$$f_q(PS_{n})=(2+q)f_q(PS_{n-1}),$$
consequently $f_q(PS_{n})=(2+q)^n$ which reflects the fact that
$PS^{n}$ is an $n$-cube.

\end{example}

\begin{example}
If $\Gamma$ is a simple graph, the corresponding hypergraphic
polytope $P_\Gamma$ is the graphic zonotope
$$P_\Gamma=\sum_{\{i,j\}\in\Gamma}\Delta_{e_i,e_j}.$$
Simple graphs generate the Hopf subalgebra of $\mathcal{HG}$ which
is isomorphic to the chromatic Hopf algebra of graphs. Therefore
$F_q(P_\Gamma)$ is the $q$-analogue of the Stanley chromatic
symmetric function of graphs introduced in \cite{G1}.
\end{example}

\begin{example}
Simplicial complexes generate another Hopf subalgebra of
$\mathcal{HG}$ which is isomorphic to the Hopf algebra of
simplicial complexes introduced in \cite{GSJ} and studied more
extensively in \cite{BHM}. It is shown in \cite[Lemma 21.2]{AA}
that hypergraphic polytopes $P_{K}$ and $P_{K^{1}}$ corresponding
to a simplicial complex $K$ and its $1$-skeleton $K^{1}$ are
normally equivalent and therefore have the same enumerators

$$F_q(P_K)=F_q(P_{K^{1}}).$$

\end{example}

\end{document}